 \theoremstyle{plain}
 \newtheorem{theorem}{Theorem}[section]
 \newtheorem{lemma}[theorem]{Lemma}
 \newtheorem{corollary}[theorem]{Corollary}
 \newtheorem{proposition}[theorem]{Proposition}
 \theoremstyle{definition}
 \newtheorem{definition}[theorem]{Definition}
 \newtheorem{remark}[theorem]{Remark}
 \newtheorem{example}[theorem]{Example}
 \theoremstyle{remark}
 \theoremstyle{plain} 
\newcommand{\thistheoremname}{}
\newtheorem{genericthm}[theorem]{\thistheoremname}
  \newtheorem*{genericthm*}{\thistheoremname}
\newenvironment{namedthm*}[1]
  {\renewcommand{\thistheoremname}{#1}%
   \begin{genericthm*}}
  {\end{genericthm*}}
\newcommand{\spec}{\operatorname{Spec}}
\newcommand{\chara}{\operatorname{char}}
\newcommand{\projection}{\operatorname{pr}}
\newcommand{\grass}{\operatorname{Grass}}
\newcommand{\galois}{\operatorname{Gal}}
\newcommand{\fanoind}{\operatorname{Index}}
\newcommand{\mni}{\medskip\noindent}
\newcommand{\mbb}{\mathbb}
\newcommand{\QQ}{\mbb{Q}}
\newcommand{\NN}{\mbb{N}}
\newcommand{\RR}{\mbb{R}}
\newcommand{\PP}{\mbb{P}}
\newcommand{\mc}{\mathcal}
\newcommand{\mf}{\mathfrak}
\newcommand{\ol}{\overline}
\title{\large O\MakeLowercase{n} S\MakeLowercase{ubvarieties of} D\MakeLowercase{egenerations of}
 F\MakeLowercase{ano} V\MakeLowercase{arieties}}
\author{\large S\MakeLowercase{antai} \large Q\MakeLowercase{u}}
\begin{document}

\begin{abstract}
The goal of this work is to study geometric properties of 
geometrically irreducible subschemes on degenerations of 
Fano varieties (more generally, of
separably rationally connected varieties).  
It is known that these geometrically irreducible subschemes 
exist when the ground field has characteristic zero
or contains an algebraically closed subfield.  
We show
that the dimension of this geometrically irreducible subscheme
has a lower bound by the Fano index of the generic fibre.
\end{abstract}

\maketitle


\section{Introduction}
Geometric properties of degenerations of Fano varieties are closely related 
to arithmetic properties of the ground field.
A conjecture of James Ax (which we call the \emph{Ax conjecture}) 
predicts that every perfect PAC field is $C_1$ (cf. \cite[Theorem D]{ax68}).
Recall that a field $k$ is called $C_1$ if every homogeneous polynomial $f\in k[x_0, \ldots, x_n]$
of degree $\le n$ has a nontrivial zero in $k^{n+1}$
(\cite[Definition 24.2.1]{fj}), and a field $k$ is called \emph{PAC}
(pseudo-algebraically closed) if each geometrically irreducible $k$-variety has a $k$-rational point
(\cite[page 203]{fj}).  
By showing that the degeneration of Fano varieties must contain a geometrically irreducible subscheme
(over the residue field),
J\'anos Koll\'ar solves the Ax conjecture in characteristic zero completely
(where the idea originates from \cite{DJL83}).
Recall that a \emph{Fano variety} over a perfect field $k$
is a normal projective variety over $k$ so that $-K_X$ is ample 
(note that there is no requirement on the singularities of the variety, cf. \S\ref{RC-SRC}).

\begin{theorem}[\protect{\cite[Theorem 3, page 236]{Kollarcom}}]\label{kollarAx}
Let $k$ be a field of characteristic 0, $C$ a smooth $k$-curve, $Z$ a reduced, 
irreducible, projective $k$-variety,
and $g\colon Z\to C$ a $k$-morphism.  Assume that the generic fibre $F_{\text{gen}}$ is 
\begin{itemize}
	\item [\emph{(1)}] smooth,
	\item [\emph{(2)}] geometrically irreducible, and
	\item [\emph{(3)}] Fano (that is, $-K_{F_\text{gen}}$ is ample).
\end{itemize}
Let $c\in C$ be a closed point with residue field $k (c)$.  Then, the fibre $g^{-1}(c)$
contains a $k(c)$-subscheme which is geometrically irreducible. 
If, in addition, every $k(c)$-irreducible component of $g^{-1}(c)$ is smooth (or normal),
then $g^{-1}(c)$ contains a $k (c)$-irreducible component which is
geometrically irreducible.
\end{theorem}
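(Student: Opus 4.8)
\emph{Step 1 (set-up).} The plan is to localize at $c$, use that a smooth Fano variety in characteristic $0$ is separably rationally connected, run the theory of very free rational curves relatively over the base, and carve the subvariety out of the special fibre --- the real content being a descent argument keeping geometric irreducibility alive through the degeneration. Concretely: replacing $C$ by $\operatorname{Spec}\mathcal{O}_{C,c}$, I may assume $C=\operatorname{Spec}R$ with $R$ a DVR, $K=\operatorname{Frac}R=k(C)$ and $\kappa:=\kappa(c)$ its residue field; write $F:=F_{\mathrm{gen}}$, a smooth geometrically irreducible Fano variety over $K$, so that $g\colon Z\to\operatorname{Spec}R$ is proper and flat with $g_{\ast}\mathcal{O}_{Z}=\mathcal{O}_{\operatorname{Spec}R}$ and hence geometrically connected fibres. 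Since $\operatorname{char}k=0$, $F$ is separably rationally connected, so the locus $\mathcal{M}^{\mathrm{vf}}\subseteq\overline{M}_{0,0}(F/K)$ of very free rational curves on $F$ is a non-empty, smooth, open subscheme, and the curves of each of its irreducible components sweep out $F$. By the comb technique --- glue the $\operatorname{Gal}(\overline{K}/K)$-conjugates of a very free curve along a tree defined over $K$ and smooth; the resulting comb is a very free curve defined over $K$ lying on a unique, hence Galois-stable, component of $\overline{M}_{0,0}$ --- one may choose a \emph{geometrically irreducible} component $N\subseteq\mathcal{M}^{\mathrm{vf}}$. Let $\mathcal{M}^{\circ}$ be the closure of $N$ in $\overline{M}_{0,0}(Z/R)$; it is integral, proper and dominant over $\operatorname{Spec}R$.

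\emph{Step 2 (the special fibre is covered).} As $\mathcal{M}^{\circ}\to\operatorname{Spec}R$ is proper and dominant it is surjective, so $\mathcal{M}^{\circ}_{\kappa}\neq\varnothing$. Let $\pi\colon\mathcal{C}\to\mathcal{M}^{\circ}$ be the universal curve and $u\colon\mathcal{C}\to Z$ the evaluation map. Then $u$ is proper, its image is closed and contains a dense subset of $F$ (the curves of $N$ sweep out $F$), and $Z$ is irreducible with generic point lying in $F$; hence $u$ is surjective, and so is its base change $u_{\kappa}\colon\mathcal{C}_{\kappa}\to g^{-1}(c)$. Thus $g^{-1}(c)$ is swept out by the limit rational curves of the family.

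\emph{Step 3 (extracting the subvariety --- the hard part).} Pick a $\kappa$-irreducible component $D$ of $\mathcal{C}_{\kappa}$ whose image $B\subseteq\mathcal{M}^{\circ}_{\kappa}$ has maximal dimension; I claim $u_{\kappa}(D)$ is a geometrically irreducible $\kappa(c)$-subvariety of $g^{-1}(c)$. The morphism $D\to B$ has generic fibre an irreducible component of a genus-$0$ stable curve --- a form of $\PP^{1}$, thus geometrically irreducible over $\kappa(B)$ --- so $D$, and with it $u_{\kappa}(D)$, is geometrically irreducible provided $B$ is. Everything therefore reduces to producing a geometrically irreducible component of $\mathcal{M}^{\circ}_{\kappa}$, and \textbf{this is the main obstacle}: geometric irreducibility of $\mathcal{M}^{\circ}$ is not automatically inherited by its special fibre (a conic bundle can degenerate to a pair of Galois-conjugate lines). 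It is resolved by bookkeeping the fields of definition in the degeneration: over $\overline{K}$ a very free curve $h\colon\PP^{1}\to F$ extends, after resolving the rational map $\PP^{1}_{R'}\dashrightarrow Z_{R'}$ out of the regular surface $\PP^{1}_{R'}$ for a suitable DVR $R'$ dominating $R$, to a morphism from a smooth surface whose special fibre is a tree of rational curves carrying a distinguished handle; the handle part of the limit, together with the way it varies over the geometrically irreducible $\mathcal{M}^{\circ}$, descends to a geometrically irreducible locus inside $\mathcal{M}^{\circ}_{\kappa}$ --- and this is where $\operatorname{char}k=0$ (separable rational connectedness and the tameness of the relevant Galois actions) is genuinely used.

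\emph{Step 4 (the refinement).} Suppose every $\kappa(c)$-irreducible component of $g^{-1}(c)$ is normal (resp.\ smooth). The subvariety $V:=u_{\kappa}(D)$ is irreducible, hence lies in a single $\kappa(c)$-irreducible component $Z_{0}$. Since $\operatorname{char}k=0$, $Z_{0}$ is geometrically normal, so $(Z_{0})_{\overline{\kappa}}$ is the disjoint union of its irreducible components $W_{1},\dots,W_{s}$, on which $\operatorname{Gal}(\overline{\kappa}/\kappa)$ acts transitively. As $V_{\overline{\kappa}}$ is irreducible it lies in one of them, say $W_{1}$; but $V$ is defined over $\kappa$, so $V_{\overline{\kappa}}$ is Galois-stable, and if $s\geq2$ some $\sigma$ sends $W_{1}$ to $W_{2}$, giving $\varnothing\neq V_{\overline{\kappa}}\subseteq W_{1}\cap W_{2}=\varnothing$, absurd. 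Hence $s=1$ and $Z_{0}$ is geometrically irreducible, which is the refined conclusion (the argument is the same with ``smooth'', smoothness being stable under base change).
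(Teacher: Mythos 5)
Your Steps 1, 2 and 4 are fine (the comb/smoothing argument in Step 1 producing a geometrically irreducible component of the very free locus through a $K$-point is standard, and the Galois argument in Step 4 correctly deduces the refinement once a geometrically irreducible subvariety $V$ of the special fibre is in hand). But the proof has a genuine gap exactly where you flag ``the main obstacle'': Step 3 never actually produces a geometrically irreducible $\kappa$-subvariety of $\mathcal{M}^{\circ}_{\kappa}$. What you have shown is that the original problem for $g^{-1}(c)$ would follow from the same statement for the special fibre of the family $\mathcal{M}^{\circ}\to\operatorname{Spec}R$ --- a family which is geometrically irreducible with geometrically irreducible generic fibre but is \emph{not} a degeneration of Fano (or otherwise special) varieties, so no recursion or known result applies to it. Indeed, the phenomenon you yourself point out (a geometrically irreducible family can degenerate to a union of Galois-conjugate components) is precisely what must be ruled out, and the paragraph about ``bookkeeping the fields of definition,'' the ``distinguished handle'' of the limit stable map, and a locus that ``descends to a geometrically irreducible locus inside $\mathcal{M}^{\circ}_{\kappa}$'' is a description of a hoped-for mechanism, not an argument: no construction of that locus, no reason it is Galois-stable and irreducible over $\bar{\kappa}$, and no statement of what is being smoothed or specialized is given. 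There is also a secondary slip in the same step: even if $B$ were geometrically irreducible, the generic fibre of $D\to B$ need not be a form of $\mathbb{P}^{1}$; a single $\kappa$-component $D$ of $\mathcal{C}_{\kappa}$ can restrict over the generic point of $B$ to a union of several $\overline{\kappa(B)}$-conjugate components of the stable curve, so geometric irreducibility of $D$ requires an additional argument.

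For comparison: the paper does not prove this statement at all --- it is quoted as Koll\'ar's theorem from [Kol05], and the whole difficulty of that theorem is exactly the existence step your Step 3 leaves open (Koll\'ar's proof does not proceed by reducing to geometric irreducibility of a component of the special fibre of a moduli space of very free curves). So the proposal should be regarded as a plausible strategy outline rather than a proof; to complete it you would need a genuine argument, not present here, showing that some canonically defined (hence Galois-stable) closed subset of $\mathcal{M}^{\circ}_{\kappa}$, or of its image in $g^{-1}(c)$, is irreducible after base change to $\bar{\kappa}$.
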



\begin{corollary}[\protect{\cite[Theorem 1]{Kollarcom}}]\label{Ax-chara-zero}
	Every PAC field of characteristic zero is $C_1$.
\end{corollary}

We include the proof from \cite{Kollarcom} here to illustrate how the arithmetic of the
ground field is concluded from the geometric property of degenerations of Fano varieties
(cf. \cite[\S 5]{Kollarcom}).

\begin{proof}
    Let $k$ be a PAC field of characteristic zero (which is automatically perfect).
	To show that $k$ is $C_1$, we need to 
    establish the existence of a $k$-rational point on the hypersurface,
    which can be singular, $H_f\subset \mathbb{P}_k^n$, defined
    by a homogeneous polynomial $f$ of degree $d$ that is $\le n$.
    By connecting $H_f$ with a smooth hypersurface of the same degree via a pencil
    of hypersurfaces,
    $H_f$ becomes a degeneration of smooth Fano varieties (notice that the degree condition $d\le n$
    gives the Fano property).  However, by Theorem~\ref{kollarAx}, $H_f$
    contains a geometrically irreducible $k$-subscheme, hence $H_f$
    contains a $k$-rational point as $k$ is a PAC field.
\end{proof}


As for the construction in the proof of Corollary~\ref{Ax-chara-zero}, 
many examples of fibrations $X\to Z$ with Fano generic fibre are realised as subfamilies
of the constant family of the projective spaces, that is, there is an embedding 
$X\to Z\times \PP^n$ over $Z$ (see, for example, \cite[Theorem 2, Example 4]{Kollarcom}).
The information about this embedding can be encoded (and generalised) 
into a polarisation by a Cartier divisor under certain conditions.

\begin{definition}\label{polarised-g-fano}
	Let $f\colon X\to Z$ be a contraction (see \S 2) of integral schemes
    where the residue field $k(z)$ is perfect for every (not necessarily closed) point $z\in Z$.
	We say that $f$ is an \emph{$L$-polarised g-Fano fibration} if 
	\begin{itemize}
		\item $L$ is a big$/Z$ and base-point-free$/Z$ Cartier divisor on $X$, and
		\item the geometric generic fibre $\ol{F}_{\text{gen}}$ of $f$ is weak $\QQ$-Fano (that is, $\ol{F}_{\text{gen}}$ has klt singularities and $-K_{\ol{F}_{\text{gen}}}$ is nef and big, see Definition~\ref{weak-Q-Fano}).
	\end{itemize}
\end{definition}

As $L$ is base-point-free$/Z$ and big$/Z$, it is clear that $L$
defines a birational $Z$-morphism $\phi\colon X\to \PP^n_Z$.  
We can measure the positivity of the relative anticanonical divisor
of $X\to Z$ via the polarisation $L$ in the following sense.

\begin{definition}\label{g-fano-index}
	Let $f\colon X\to Z$ be an $L$-polarised g-Fano fibration.
	The \emph{$L$-index of $X\to Z$} (or \emph{the index of $X\to Z$ with respect to $L$}),
	denoted by $i_L(X/Z)$, is the integer $r\ge 0$ such that 
	\begin{equation}
		-K_{F_{\text{gen}}} \equiv r (L|_{F_{\text{gen}}}) \text{ over }K(Z), \label{def-equ}\tag{\dag}
	\end{equation}
	where $F_{\text{gen}}$ is the generic fibre of $f$, $L|_{F_{\text{gen}}}$
    is the pullback of $L$ to $F_{\text{gen}}$, and $K(Z)$ is the function field of $Z$.
	If such an integer $r$ does not exist, we set $i_L(X/Z) = -\infty$.
\end{definition}

Note that \eqref{def-equ} in Definition~\ref{g-fano-index} is defined locally over the generic point
of $Z$; however, the polarisation $L$ is globally defined on the whole scheme $X$ 
of the fibration $X\to Z$.  While, for many examples
regarding degenerations of Fano varieties in practice,
$i_L(X/Z)$ is just the Fano index of a general fibre of $X\to Z$, see Example~\ref{exa-complete-intersection}.
Recall that the \emph{Fano index} $\fanoind (F)$ of a Fano variety $F$
is the largest natural number $m$ such that $-K_F\equiv m H$
for some Cartier divisor $H$ (cf. 
\cite[Definition V.1.9]{Kollar_rational_curves}). 
The ample divisor $H$ is called a \emph{fundamental divisor} of $F$
(cf. \cite[page 115]{HS19}).


On the other hand, the geometrically irreducible subschemes on degenerations as in Theorem~\ref{kollarAx} 
admit many geometric properties.  For example, the work
\cite{hx07} shows that the geometrically irreducible subscheme 
found by Koll\'ar (in Theorem~\ref{kollarAx}) is actually rationally connected
(see Definition~\ref{rc-var}).
In particular, by applying the techniques from the Minimal Model Program, \cite[Theorem 1.2]{hx07}
generalises the result of Koll\'ar by allowing the base $C$ to be a 
higher dimensional klt $k$-variety 
and the generic fibre $F_{\text{gen}}$ to be rationally connected,
not only smooth and Fano.
Recall that every smooth Fano variety in characteristic zero is rationally connected
(see \cite[Corollary V.2.14]{Kollar_rational_curves} and \cite[Theorem 1]{zhangqi}).


In this paper, we show that the dimension of the geometrically irreducible subscheme
predicted in Theorem~\ref{kollarAx} has a lower bound by the ``Fano index"
of the generic fibre of the fibration.
The following theorem is the main result of this paper.

\begin{theorem}\label{dim-thm} 
	Let $f\colon X\to Z$ be an $L$-polarised g-Fano fibration over 
	a Dedekind scheme $Z$, i.e., a normal Noetherian scheme of dimension one, 
    satisfying the following condition:
    let $z\in Z$ be an arbitrary closed point, then, either
    \begin{itemize}
        \item [\emph{(1)}] the residue field $k(z)$ of $Z$ at $z$ has characteristic zero (e.g., $Z$ is a smooth quasi-projective curve over a field of characteristic zero as in Theorem~\ref{kollarAx}), or
        \item [\emph{(2)}] the residue field $k(z)$ of $Z$ at $z$ has positive characteristic, $k(z)$ contains the algebraic closure of its prime subfield, the function field $K(Z)$ of $Z$ has characteristic zero, and the localisation $\mc{O}_{Z, z}$ is prime regular (see Definition~\ref{primeregular}).
    \end{itemize}
	If 
    \[ 1\le i_L(X/Z) \le \dim(X/Z), \]
	where $\dim (X/Z)$ is the dimension of the general fibre of $f$, then,
	for any closed point $z\in Z$, the fibre $X_z$ contains a geometrically
	irreducible $k(z)$-subscheme $Y_z \subseteq X_z$ such that
    \begin{equation}
        \dim Y_z \ge i_L(X/Z) - 1.  \tag{$\star$}\label{key-condition}
    \end{equation}
	Moreover, if the maximal dimension of the geometrically irreducible $k(z)$-subschemes of $X_z$
	is equal to $i_L(X/Z)-1$, then at least one of these geometrically irreducible 
	$k(z)$-subschemes is $k(z)$-unirational. 
\end{theorem}

\cite[Example 4]{Kollarcom} shows that the only geometrically irreducible subschemes
on degenerations of smooth Fano varieties may be just rational points
(i.e., geometrically irreducible subschemes of dimension zero).  
Our Theorem~\ref{dim-thm} gives a criterion for the existence of \emph{positive dimensional}
geometrically irreducible subschemes on degenerations of Fano varieties.  Moreover, we will show in Example~\ref{counterexample}
that the lower bound on dimension~\eqref{key-condition} in Theorem~\ref{dim-thm}
is sharp.


\mni
\textbf{Construction of this article}.
In Section~\ref{notationsandconventions}, we explain our notation and conventions in this article.
In Section~\ref{preliminaryresults}, we state some preliminary results for the proof of Theorem~\ref{dim-thm}.
Section~\ref{proofofmaintheorem2} gives the proof of Theorem~\ref{dim-thm}.
Examples about Theorem~\ref{dim-thm} are given in Section~\ref{examplesandcounterexamples}.

\mni
\textbf{Acknowledgements:} 
I am very grateful to my PhD advisor, professor Jason Starr,
for introducing this problem and for his constant help during the proof.
I would like to thank my postdoc mentor professor Caucher Birkar for 
many helpful comments on this work.  
I would also like to thank Dingxin Zhang and Zhiyu Tian for useful discussions about the theory of 
(separably) rationally connected varieties.  
I am also very grateful to the anonymous referees for their many corrections and suggestions to improve this paper.


\section{Preliminaries}\label{notationsandconventions}

\subsection{Varieties and schemes}
Every scheme in this article is Noetherian.
By a \emph{variety}, we mean a separated, finite type, 
integral scheme over a field (not necessarily algebraically closed). 
Notice that in our definition a variety does not have to be geometrically integral
(cf. \cite[page 105]{Hart}).
All the morphisms in this article are essentially of finite type.

\subsection{Contractions}
A \emph{contraction} is a projective morphism of schemes $f\colon X\to Y$
such that $f_*\mc{O}_X = \mc{O}_Y$; $f$ is not necessarily birational.
In particular, $f$ has connected fibres.  Moreover, if $X$ is normal, then $Y$ is also normal.

\subsection{Divisors and singularities of pairs}
For (relative) linear equivalence (resp. $\QQ$-linear equivalence, resp. $\RR$-linear equivalence,
resp. numerical equivalence) of divisors, we refer the readers to \cite[\S 2.3]{B-Fano}.
A \emph{$\QQ$-line bundle} $\mc{L}$ on a scheme $X$ is a reflexive sheaf on $X$ such that $\mc{L}^{[m]}$ is an invertible sheaf for some $m\in \NN$, see \cite[\S 1.1]{Kol_singularities_of_MMP}.
For relatively nef (resp. relatively big) divisors,
see \cite[Definition 3.1.1]{bchm}.
For singularities of pairs, such as, klt pairs and lc pairs, see \cite[\S 2.3]{km98}
and \cite[\S 2.8]{B-Fano}.
Notice that, to define singularities of pairs, it suffices to assume that
the ground field of varieties is perfect, see \cite[\S 2.1]{Kol_singularities_of_MMP}.

\subsection{Relatively dense subset and unirationality}
Fix a scheme $S$.  Let $X\to S$ be a separated, finite type $S$-scheme.  
An open subset $U\subset X$ is called 
\emph{$S$-dense} if the fibre $U_s$ is dense in $X_s$ for each $s\in S$.  
We say that $X$ is \emph{$S$-unirational}
if there is an $S$-rational map $\mathbb{P}_S^n\dashrightarrow X$ whose domain 
of definition is $S$-dense
in $\mathbb{P}_S^n$, see \cite[\S 2.5]{BLR}.


\subsection{Grassmannians}

Denote by $\grass (d, N)$ the $\mathbb{Z}$-scheme that represents the functor:
\begin{align*}
\textbf{Sch}&\to\textbf{Set}  \\
S&\mapsto \{\text{rank $(N+1-d)$ sub-bundles of }\mc{O}_S^{N+1}\}.
\end{align*}
For any scheme $S$, denote by $\grass (d, N)_S$ the base change of $\grass (d, N)$ to $S$.
If $S=\spec A$ is affine, we just denote the base change by $\grass (d, N)_A$ for simplicity.
For an $S$-scheme $D$ admitting an $S$-morphism $D \to \grass (d, N)_S$, 
denote by $\mc{X}_D$ the base change
of the universal family $\mc{X}$ over $\grass (d, N)_S$ to $D$.
Note that, for any field $k$, $\grass (d, N)_k$ parameterises codimension $d$ linear subspaces
in $\mathbb{P}_k^N$.


\subsection{Rational-connectivity and separable-rational-connectivity}\label{RC-SRC}
For more general background about (separably) rationally connected and rationally chain connected
varieties, we refer the readers to \cite[\S IV.3]{Kollar_rational_curves}.

\begin{definition}[\protect{\cite[page 3931]{hx07}}]\label{rc-var}
	Let $X$ be a variety over a field $k$.  We say that $X$ is 
	\emph{rationally chain connected} if for an uncountable algebraically closed field $K$
	containing $k$, any two $K$-points of $X$ can be connected by a chain of rational 
	curves defined over $K$.  We say that $X$ is \emph{rationally connected}
	if any two $K$-points of $X$ can be connected by an irreducible 
	rational curve defined over $K$.
\end{definition}


\begin{definition}[cf. \protect{\cite[page 3935]{hx07}}]\label{logfano}
Let $k$ be a perfect field.  A proper, normal variety $X$ over $k$ 
is called \emph{log Fano} (or \emph{log $\mathbb{Q}$-Fano})
if there exists an effective $\mathbb{Q}$-divisor $D$ on $X$ such that $(X, D)$ is klt and 
$-(K_X+D)$ is nef and big.
\end{definition}

When emphasising the divisor $D$, 
we also say that the pair $(X, D)$ is log $\mathbb{Q}$-Fano. 
If $k$ is a field of characteristic zero, 
a log $\mathbb{Q}$-Fano variety is always rationally connected,
see \cite{zhangqi} and \cite{hm07}.  

\begin{definition}\label{weak-Q-Fano}
    A normal projective variety $X$ over a perfect field $k$
    is called \emph{weak $\mathbb{Q}$-Fano} if $X$ has klt singularities and 
    $-K_X$ is nef and big.
    Moreover, we say that $X$ is \emph{$\QQ$-Fano} if $X$ is klt and $-K_X$ is ample.	
\end{definition}

Unfortunately, there is no common convention for the definitions of 
weak $\QQ$-Fano varieties and $\QQ$-Fano varieties in literature.
Some authors drop the singularity requirement,
hence a weak $\QQ$-Fano variety just means a normal projective variety $X$
with $-K_X$ nef and big; however, some authors 
require that the variety $X$ has terminal singularities.
We adopt Definition~\ref{weak-Q-Fano} throughout this paper.
In particular, a weak $\QQ$-Fano variety over a field of characteristic zero
is rationally connected.


The correct notion for rational-connectivity to be studied in
positive characteristic is the so-called 
\emph{separable-rational-connectivity}.
We take the definition of separable-rational-connectivity 
from \cite{deJongStarrofsections} as following.
For smooth varieties over algebraically closed fields, 
this definition agrees with Koll\'ar's definition in \cite[(IV.3.2.3)]{Kollar_rational_curves},
see \cite[Theorem IV.3.7, Theorem IV.3.9.4]{Kollar_rational_curves}.
Separable-rational-connectivity is needed to state the result \cite[Theorem 1.5]{jasonax},
which is one of the ingredients in the proof of Theorem~\ref{dim-thm} 
when the residue fields of the Dedekind scheme $Z$ have positive characteristics.

\begin{definition}[\protect{\cite[page 567]{deJongStarrofsections}} and \protect{\cite[Theorem IV.3.7]{Kollar_rational_curves}}]\label{rationalcnn}
Let $k$ be an algebraically closed field and let $V$ be a normal proper variety over $k$.
We say that $V$ is \emph{separably rationally connected} if
there exists a smooth rational curve $C\to V$ mapping to $V$ which avoids
the singularities of $V$ such that $\mathcal{T}_V|_C$ is ample on $C$.
\end{definition}


\subsection{Prime regular discrete valuation rings}

To address obstructed deformations of separably 
rationally connected varieties 
(in particular, over DVRs of mixed characteristics), we take the definition of
prime regular discrete valuation rings (DVRs) from \cite{Jason:pacfields}.

\begin{definition}[\protect{\cite[page 8]{Jason:pacfields}}]
For DVRs $(\Lambda, \mf{m}_{\Lambda})$ and $(R, \mf{m}_R)$, a local homomorphism $\phi\colon \Lambda\to R$
is \emph{regular} if
\begin{enumerate}[label=(\roman*)]
\item $\phi(\mf{m}_{\Lambda})R$ is equal to $\mf{m}_R$, i.e., $\phi$ is \emph{weakly unramified}
      (note that this implies that $\phi$ is injective),
\item the residue fields extension $\Lambda/\mf{m}_{\Lambda} \to R/\mf{m}_R$ is separable
      (note that this holds automatically if $\Lambda / \mf{m}_{\Lambda}$ is perfect), and 
\item the fraction fields extension is separable
      (note that this holds automatically if the fraction field has characteristic 0).
\end{enumerate}
\end{definition}


\begin{definition}[\protect{\cite[Definition 3.3, page 9]{Jason:pacfields}}]\label{primeregular}
(1). A \emph{prime finite} DVR $(\Lambda, \mf{m}_{\Lambda})$ is a DVR whose residue field $\Lambda/\mf{m}_{\Lambda}$
is a finite extension of the prime subfield, i.e., either the residue field is a finite field if the 
characteristic is positive, or it is a number field if the characteristic is 0.

(2).  A DVR $(R, \mf{m}_R)$ is \emph{prime regular}, or \emph{regular over a DVR whose residue field is 
finite over the prime subfield}, if there exists a prime finite DVR $(\Lambda, \mf{m}_{\Lambda})$
and a local homomorphism $\phi\colon \Lambda\to R$ that is regular.
\end{definition}

\begin{remark}\label{smoothtoprimeregular}
If $(R, \mf{m}_R)$ is a prime regular DVR and $(R, \mf{m}_R)\to (D, \mf{m}_D)$	
is a smooth morphism of DVRs, then $(D, \mf{m}_D)$ 
is also prime regular (see \cite[Remark 3.6]{Jason:pacfields}).
Moreover, every equicharacteristic DVR is prime regular (see \cite[Lemma 3.4]{Jason:pacfields}). 
\end{remark}


\subsection{Ax conjecture in positive characteristics}

Very different approaches have to be employed 
to prove results as Theorem~\ref{kollarAx} in positive characteristics.  
When the field contains an algebraically closed subfield, 
this question is settled in \cite[Theorem 24.3.6 (a)]{fj}.
On the other hand, by using cohomological methods and the technique of decomposition of diagonal,
the results \cite[Theorem 1.1, page 2679]{Esnault_Xu:rational_points_finite_fields} 
and \cite[Corollary 1.2, page 798]{Fakhruddin:2005uo} 
show that a degeneration of a smooth rationally connected variety over a finite field 
must contain a rational point.
Moreover, by a very different approach, the work \cite{jasonax}
shows the existence of a geometrically irreducible subscheme
on degeneration of separably rationally connected varieties
(see Definition~\ref{rationalcnn}).


\begin{theorem}[\protect{\cite[Theorem 1.5]{jasonax}} and \protect{\cite[Remark 3.15]{Jason:pacfields}}]
\label{jason-maintheorem}
Let $R$ be a prime regular DVR (see Definition~\ref{primeregular}) with residue field $k_R$, and let 
$X_R$ be a proper, flat $R$-scheme. 
Denote by $L_R$ the compositum of $k_R$ with the algebraic closure of the prime subfield.
If the geometric generic fibre of $X_R$ over $R$ is separably rationally connected
(in the sense of Definition~\ref{rationalcnn}),
then there exists a closed subscheme $Y_{L_R}$ of $X_R\otimes_R L_R$ which 
is geometrically irreducible over $L_R$, i.e., $Y_{L_R}\otimes_{L_R}\ol{L}_R$
is irreducible.
\end{theorem}

\begin{remark}\label{rem-jason-ax}
    In Theorem~\ref{jason-maintheorem}, when the fraction field of $R$ has characteristic zero,
    it suffices to assume that the geometric generic fibre of $X_R \to \spec R$ is rationally connected
    (see Definition~\ref{rc-var}) by \cite[Theorem 1.1]{GHS} and \cite[\S 5]{jasonax}.
\end{remark}


\subsection{Nice DVRs}
We use the notion of \emph{nice DVR} to refer the discrete valuation rings over which 
Theorem~\ref{kollarAx} and Theorem~\ref{jason-maintheorem} can give the existence
of a geometrically irreducible subscheme on degeneration of 
separably rationally connected varieties.

\begin{definition}\label{nice-dvr}
Let $R$ be a DVR.
We say that $R$ is \emph{nice} if either
\begin{itemize}
\item the residue field $k_R$ of $R$ has characteristic zero, or
\item the fraction field $K(R)$ of $R$ has characteristic zero, $\chara k_R>0$, $R$ is prime regular,
      and $k_R$ contains the algebraic closure of its prime subfield.
\end{itemize}
\end{definition}

Let $R$ be a nice DVR.  If the residue field $k_R$ of $R$
has positive characteristic, then $k_R$ contains the algebraic closure of its prime subfield
by Definition~\ref{nice-dvr}, hence $k_R$ is equal to the compositum of $k_R$
with the algebraic closure of the prime subfield.  In this case,
if $X_R$ is a proper, flat $R$-scheme, whose geometric generic fibre is separably
rationally connected (or, rationally connected as $K(R)$ has characteristic zero, see Remark~\ref{rem-jason-ax}),
then Theorem~\ref{jason-maintheorem} shows the existence of a geometrically irreducible
$k_R$-subscheme of $X_{k_R}$.


\section{Preliminary results}\label{preliminaryresults}

To prove Theorem~\ref{dim-thm}, we first show in the following 
proposition that there exists a subscheme of Grassmannian 
such that the family of linear subspaces parameterised by this subscheme is base-point-free.

\begin{proposition}\label{unirationalparameter}
Fix an arbitrary natural number $N\in \NN$.
Let $R$ be a discrete valuation ring.  Denote by $K$ (resp. $k$) the 
fraction field (resp. the residue field)
of $R$.  Then, for every $d\ge 1$, there exists an integral, normal $R$-scheme $D$
admitting a morphism $D\to \grass (d, N)_R$ over $\spec R$ whose image is a 
locally closed subscheme of $\grass (d, N)_R$ such that
\begin{itemize}
	\item [\emph{(1)}] $D\to \spec R$ is smooth of relative dimension $d$,
	\item [\emph{(2)}] $D\to \spec R$ is $R$-unirational,
	\item [\emph{(3)}] $D\to \spec R$ has geometrically irreducible fibres, and
	\item [\emph{(4)}] the induced morphism $\mc{X}_D\to \mathbb{P}_R^N$ is quasi-finite.
\end{itemize}
Moreover, $D$ can be chosen so that the image of a general point of $D_K$
in $\grass (d, N)_K$ is also a general point of $\grass (d, N)_K$, where 
$D_K$ and $\grass (d, N)_K$ are the fibres of $D$ and $\grass (d, N)_R$ over $\spec K$ respectively.
\end{proposition}

\begin{proof}
We argue by induction on the relative dimension $d$ of $D\to \spec R$.  \\

\emph{Step 1.}
For $d=1$, $\grass (1,N)_K$ is the dual projective 
space $(\mathbb{P}^N_K)^\vee$.  Inside this parameter space, take $D_K$
as any rational normal curve of degree $N$.
There exist homogeneous coordinates on $\mathbb{P}_K^N$, say, 
$[x_0:\ldots :x_N]$, such that $D_K$ is isomorphic to $\mathbb{P}_K^1$ with homogeneous 
coordinates $[t_0: t_1]$ and $\mc{X}_{D_K}$ is the hypersurface in $\mathbb{P}_K^1 \times_{\spec K} \mathbb{P}_K^N$ with 
bihomogeneous defining equation
\[F = t_0^N t_1^0 x_0 + t_0^{N-1} t_1^1 x_1 +\cdots+t_0^{N-m} t_1^m x_m +\cdots+ t_0^0 t_1^N x_N.\]
Since a rational normal curve is nondegenerate, the morphism $\mc{X}_{D_K}\to \mathbb{P}_K^N$ is quasi-finite.
Note that this works over any ground field, even over any ring
since under the homogeneous coordinates above both the rational normal curve $D_K$
and $\mc{X}_{D_K}$ are defined over $\mathbb{Z}$.
Thus, the closure $D$ of $D_K$ (with reduced scheme structure) in $\grass (1, N)_R$ is the required scheme,
where $D_k$ is also a rational normal curve.\\

\emph{Step 2.}
Now, suppose that for some positive integer $d<N$, an integral, normal, $R$-scheme
$D$ of relative dimension $d$ admitting a morphism $D\to\grass (d,N)_R$ 
has been constructed such that all the prescribed properties (1) to (4) of $D\to \spec R$ are satisfied, 
and consider the problem for $d+1$.  

The universal family $\mc{X}_D\subset D \times_{\spec R} \mathbb{P}_R^N$
is a projective space sub-bundle of relative dimension $N-d$.   
Denote by $U:=\spec A$ an affine neighbourhood of the generic point of $D_k$ in $D$.
Then, up to shrinking $U$, we can assume that $\mc{X}_U$ is $A$-isomorphic
to $U\times_{\spec R} \mathbb{P}_R^{N-d}\simeq \mathbb{P}_A^{N-d}$.
Since Step 1 works over any ring, there exists a family $E\simeq\mathbb{P}_A^1$
and a projective space bundle $\mc{X}_E$ over $E$ such that
every geometric fibre of $\mc{X}_E\to E$ over $s\in E$ is 
a codimension $d+1$ linear subspace in $\mathbb{P}^N_{\ol{s}}$ and that
the projection from $\mc{X}_E$ to $\mc{X}_U$ is quasi-finite.  
For convenience, we summarise the objects in the argument in the following diagram.
\[\xymatrix{
\mc{X}_E\ar[d]\ar[r]  &  \grass (1, N-d)_A\times_{\spec A} \mc{X}_U\ar[r]\ar[d]  &  \mc{X}_U\ar[r]\ar[d]  &
\grass(d, N)_R\times_{\spec R} \mathbb{P}_R^N\ar[r]\ar[d]  & \mathbb{P}_R^N\ar[d] \\
E\ar[r]  & \grass(1, N-d)_A\ar[r]  & U\ar[r] & \grass(d, N)_R\ar[r] & \spec R 
}\]
Since the projection from $\mc{X}_U$ to $\mathbb{P}_R^N$ is also quasi-finite, 
it follows that the composite projection from $\mc{X}_E$ to $\mathbb{P}_R^N$ is quasi-finite.
By construction, $E$ is isomorphic to $\mathbb{P}_R^1\times_{\spec R} U$ which is $R$-unirational
as $U$ is also $R$-unirational by induction hypothesis on the $R$-scheme $D$.
Moreover, because both the smooth morphisms $E\to U$ and $U\to \spec R$ 
have geometrically irreducible fibres,
their composite morphism $E\to \spec R$ is smooth and also has geometrically irreducible fibres 
(see \cite[Proposition 4.3.8]{Liuqing}).\\

\emph{Step 3.}
By the universal property of $\grass (d+1, N)_R$, there is an $R$-morphism 
\[ \rho\colon E\to \grass (d+1, N)_R \]
such that $\mc{X}_E$ is the pullback of the universal family over $\grass (d+1, N)_R$ via $\rho$.
Then, $E$ is the desired integral, normal $R$-scheme.
Moreover, by the inductive construction in Step 1 and Step 2,
the $R$-scheme $E$ can be chosen so that the image of a general point of $E_K$ in $\grass (d+1, N)_K$
is also a general point of $\grass (d+1, N)_K$.
\end{proof}


\begin{lemma}\label{qfanofibration}
Let $k$ be a field of characteristic zero, and let $f\colon X\to Y$ be a surjective, 
projective morphism of geometrically integral $k$-varieties
with geometrically irreducible fibres.
Denote by $K(Y)$ the function field of $Y$.
Suppose that there is a dense subset $T\subset Y$ of closed points such that $X_{\ol{t}}$ is
weak $\mathbb{Q}$-Fano
for each $t\in T$.  
Then, the geometric generic fibre $X_{\ol{K(Y)}}$ is also weak $\mathbb{Q}$-Fano.
\end{lemma}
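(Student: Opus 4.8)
By Definition~\ref{logfano} (with $D=0$), to prove that $X_{\ol{K(Y)}}$ is $\mathbb{Q}$-Fano I must check that it is integral, that it is normal, that it has klt singularities, and that $-K_{X_{\ol{K(Y)}}}$ is nef and big; properness is automatic since $f$ is proper, and $\ol{K(Y)}$ is perfect. The plan is to replace $Y$ by a dense open subscheme — which changes neither the generic point $\eta$ nor the density of $T$ (replace $T$ by its intersection with the open set, still a dense set of closed points) — over which $f$ is as well behaved as possible, and then to transfer each property from the dense set $T$ of closed points to $\eta$ by constructibility and openness; this is where $\chara k=0$ enters. Concretely, using generic flatness, generic smoothness of $Y$, the constructibility of the loci of points whose fibre is geometrically normal or geometrically integral, and the openness of the relative $\mathbb{Q}$-Gorenstein locus, I may assume after shrinking: $Y$ is smooth; $f$ is flat, finitely presented and proper; $X$ is normal (being flat over the regular base $Y$ with normal fibres); every fibre $X_y$ is geometrically normal and geometrically integral (those constructible loci contain the dense set $T$, hence a dense open); and some positive multiple $m(-K_{X/Y})$ of the relative canonical divisor is Cartier with $K_{X/Y}|_{X_y}=K_{X_y}$ for all $y$ (each $X_{\bar t}$ is $\mathbb{Q}$-Gorenstein, being $\mathbb{Q}$-Fano).

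With this setup, integrality and normality of $X_{\ol{K(Y)}}$ are immediate: the constructible sets $\{\,y : X_y \text{ geometrically integral}\,\}$ and $\{\,y : X_y \text{ geometrically normal}\,\}$ contain the dense subset $T$, hence contain $\eta$, so $X_\eta$ is geometrically integral and geometrically normal over $K(Y)$.

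For the klt property I would compare discrepancies in a family of log resolutions. Fix a log resolution $\pi\colon\wt X\to X$ — it exists because $\chara k=0$ — and write $K_{\wt X}=\pi^{*}K_{X/Y}+\sum_i a_i E_i$, with the $E_i$ the $\pi$-exceptional prime divisors. Applying generic flatness and generic smoothness to $\wt X\to Y$, to $X\to Y$, to each $E_i\to Y$, and to the strata of the simple normal crossing divisor $\sum_i E_i$, I may shrink $Y$ once more so that for every $y$ the restriction $\pi_y\colon\wt X_y\to X_y$ is again a log resolution with exceptional prime divisors $(E_i)_y$. Since $Y$ is now smooth and $\wt X\to Y$ is flat, $\wt X_y$ has trivial normal bundle in $\wt X$, so adjunction gives $K_{\wt X_y}=K_{\wt X}|_{\wt X_y}=\pi_y^{*}K_{X_y}+\sum_i a_i(E_i)_y$; hence the discrepancy of $(E_i)_y$ over $X_y$ equals $a_i$, independently of $y$. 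Therefore $X_y$ is klt if and only if $a_i>-1$ for every $i$, a condition not involving $y$; since $X_{\bar t}$ — equivalently $X_t$, as the klt condition is insensitive to separable field extensions — is klt for the dense set of $t\in T$, we get $a_i>-1$ for all $i$, so $X_\eta$, and hence $X_{\ol{K(Y)}}$, is klt. I expect this matching of discrepancies between the generic and the general closed fibre to be the main technical point.

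It remains to show $-K_{X_{\ol{K(Y)}}}$ is nef and big. For nefness, suppose not; then there is an irreducible curve $C\subset X_{\ol{K(Y)}}$ with $(-K)\cdot C<0$. Now $C$ is defined over a finite extension $L$ of $K(Y)$; let $\nu\colon Y'\to Y$ be the normalization of $Y$ in $L$, a finite dominant morphism of $k$-varieties, and spread $C$ out to a curve $\mathcal{C}$ that is flat over a dense open $V\subset Y'$, with generic fibre $C$. Intersection numbers are constant in the flat family $\mathcal{C}\to V$, so $\bigl(-K_{X/Y}|_{X_{y'}}\bigr)\cdot\mathcal{C}_{y'}=(-K)\cdot C<0$ for every $y'\in V$. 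But $\nu(V)$ is dense and constructible, hence contains a dense open of $Y$ and so meets $T$; and for $y'\in V$ lying over such a $t\in T$ one has $X_{\bar t}\cong X_{y'}\otimes_{\kappa(y')}\ol{\kappa(y')}$, so $X_{\bar t}$ contains a curve on which $-K$ is negative — contradicting that $X_{\bar t}$ is $\mathbb{Q}$-Fano. Hence $-K_{X_\eta}$ is nef. For bigness, for each integer $\ell>0$ the function $y\mapsto\chi\bigl(X_y,\mathcal{O}(-\ell m K_{X/Y})|_{X_y}\bigr)$ is locally constant by flatness, so every coefficient of this polynomial in $\ell$ is constant on the connected scheme $Y$; in particular the top self-intersection number $\bigl((-K_{X/Y})|_{X_y}\bigr)^{\dim X_y}$ — a positive multiple of the leading coefficient — is independent of $y$, and for $t\in T$ it equals $(-K_{X_{\bar t}})^{\dim X_{\bar t}}>0$ because $X_{\bar t}$ is nef and big. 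Thus $(-K_{X_\eta})^{\dim X_\eta}>0$, and a nef divisor with positive top self-intersection is big, so $-K_{X_{\ol{K(Y)}}}$ is nef and big. This completes the verification that $X_{\ol{K(Y)}}$ is $\mathbb{Q}$-Fano.
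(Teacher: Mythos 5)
The decisive unjustified step is the one where, ``after shrinking $Y$,'' you arrange that some multiple $m(-K_{X/Y})$ is Cartier and restricts to $-mK_{X_y}$ on every fibre, citing ``openness of the relative $\QQ$-Gorenstein locus.'' There is no such openness (nor even constructibility) statement to cite: for a flat proper family of normal varieties, the locus of $y$ with $X_y$ $\QQ$-Gorenstein is at best a countable union of constructible sets (one for each putative index $m$), since the reflexive powers $\omega_{X_y}^{[m]}$ do not commute with base change and the index is unbounded; being $\QQ$-Gorenstein is well known \emph{not} to be open in flat families (this failure is exactly why Koll\'ar's condition is imposed in the moduli theory of stable varieties). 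Moreover, even constructibility for each fixed $m$ would not close the gap under the stated hypothesis: $T$ is only assumed to be a \emph{dense} set of closed points, and a countable union of constructible sets can contain a dense set of closed points (Noether--Lefschetz-type loci) without containing the generic point; nor is $k$ assumed uncountable, so no Baire-type argument is available without a preliminary reduction. Everything downstream of this step presupposes it: the formula $K_{\wt X}=\pi^{*}K_{X/Y}+\sum_i a_iE_i$ requires $K_{X/Y}$ to be $\QQ$-Cartier on the total space, the discrepancy comparison for klt, the intersection number $\bigl(-K_{X/Y}|_{X_{y'}}\bigr)\cdot\mc{C}_{y'}$ in the nefness argument, and the Snapper-polynomial computation for bigness all use the line bundle $\mc{O}(-mK_{X/Y})$ and its compatibility with fibres (itself a nontrivial ``Koll\'ar condition''). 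So the gap is central rather than cosmetic: you need an actual argument that the generic fibre (equivalently the total space near it, after shrinking) is $\QQ$-Gorenstein, and none is given.

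For comparison, the paper's proof sidesteps this issue (at the cost of handling singular fibres only implicitly) by first reducing to an uncountable algebraically closed base field and shrinking $Y$ so that $f$ is smooth via generic smoothness; then $\omega_{X/Y}^{\vee}$ is an honest line bundle and no $\QQ$-Cartier or klt question arises, relative nefness is obtained from the very-general-fibre argument of \cite{km} (1.41) and the proof of \cite{rob} (1.4.14), and relative bigness from asymptotic Riemann--Roch. The parts of your argument that do not touch the $\QQ$-Cartier issue are sound and in some respects sharper than the paper's sketch: constructibility correctly handles geometric integrality and normality with merely dense $T$; your curve-spreading contradiction for nefness genuinely works with a dense $T$ (unlike a naive ``nef on very general fibres'' appeal, since $T$ could sit inside a countable union of proper closed subsets); and the log-resolution discrepancy comparison is the right tool for transporting klt-ness to the generic fibre \emph{once} $K_{X/Y}$ is known to be $\QQ$-Cartier there. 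The fix is therefore to supply that missing input --- e.g.\ by proving $\QQ$-Gorenstein-ness of the generic fibre in the situation at hand, or by restricting to the setting in which the lemma is actually applied in the paper, where the generic fibre is a general linear section already shown to be normal and klt.
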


\begin{proof}
We can assume that $k$ is an uncountable algebraically closed field of characteristic zero.
By generic flatness, 
up to shrinking $Y$ to an open dense subset, we can assume further that
\begin{itemize}
	\item $f$ is flat,
    \item $Y$ is nonsingular,
	\item all the geometric fibres of $f$ are normal,
	\item the singular locus of $f$ dominates $Y$, and
	\item $K_{X/Y}$ is $\mathbb{Q}$-Cartier.
\end{itemize}
Take a log resolution of $X$ so that the inverse image of the 
singular locus of $f$ in the resolution is a simple normal crossing divisor, 
then maybe after shrinking $Y$ 
further it is easy to see that we can assume that every 
geometric fibre of $f$ is klt.
By hypothesis, $-K_{X/Y}$
is nef and big on every $X_t$ for $t\in T$.  
By \cite[(1.41)]{km98} and the proof of \cite[(1.4.14)]{robpositivity1}, 
$-K_{X/Y}$ is nef over the generic point of $Y$.
Finally, since $f$ is flat, asymptotic Riemann-Roch (see \cite[(VI.2.15)]{Kollar_rational_curves}) 
shows that $-K_{X/Y}$ is also $f$-big over an open dense subset of $Y$, 
hence $X_{\ol{K(Y)}}$ is weak $\mathbb{Q}$-Fano.
\end{proof}


For our purpose to prove Theorem~\ref{dim-thm}, 
we need a version of Theorem~\ref{kollarAx}
over general DVRs, not only smooth curves, see also \cite[Theorem 1.3]{jasonax}.

\begin{proposition}\label{Kollarhigherdimensional}
	Let $k$ be a field of characteristic zero, and let $R$ be a DVR with residue field $k$.
	Denote by $K$ the fraction field of $R$.  Suppose that $X$ is a proper, flat $R$-scheme
	such that the geometric generic fibre $X_{\ol{K}}$ is weak $\mathbb{Q}$-Fano.
	Then, the closed fibre $X_k$ contains a geometrically irreducible and rationally connected closed $k$-subvariety.
\end{proposition}

\begin{proof}
	By limit argument, we can reduce the problem to the geometric situation 
	(see \cite[Definition 1.7, Lemma 1.8]{jasonax}):
	\begin{itemize}
		\item $S$ is a Dedekind domain of finite type over a field $\kappa$ of characteristic zero and
              $\mf{s}$ is a maximal ideal of $S$,
		\item $P$ is a flat, quasi-projective $S$-scheme whose geometric generic fibre is integral and normal,
		\item $Q$ is an integral, normal Weil divisor of $P$ contained in $P_{\mf{s}}$, and
		\item $X_P$ is a proper, flat $P$-scheme whose geometric generic fibre over $P$ is weak $\mathbb{Q}$-Fano.
	\end{itemize}
	Denote by $X_Q$ the fibre product $X_P\times_P Q$.
	Then, the problem is reduced to find a geometrically irreducible and 
    rationally connected closed $K(Q)$-subvariety of $X_{K(Q)}$.
	Up to shrinking $P$, we can assume that $P$ is smooth, so trivially klt.  
	Recall that weak $\mathbb{Q}$-Fano varieties are rationally connected in characteristic zero.
	Then, we are in the situation to apply \cite[Theorem 1.2]{hx07} so that 
	$X_{K(Q)}$ contains a geometrically irreducible and rationally connected closed $K(Q)$-subvariety.
\end{proof}


\section{Dimensions of subvarieties on degenerations}\label{proofofmaintheorem2}

Now, we give the proof of Theorem~\ref{dim-thm} by the following local version of Theorem~\ref{dim-thm}.

\begin{theorem}\label{maintheorem2}
Let $R$ be a nice DVR.
Denote by $k$ (resp. $K$) the residue field (resp. the fraction field) of $R$.
Suppose that $X$ is a projective and flat $R$-scheme, and let $\mc{L}$ be
a big$/\spec R$ and base-point-free$/\spec R$ invertible sheaf on $X$.  Assume that:
\begin{itemize}
	\item [\emph{(a)}] the geometric generic fibre $X_{\ol{K}}$ is normal, integral, and weak $\mathbb{Q}$-Fano,
    \item [\emph{(b)}] on the generic fibre, the $\QQ$-line bundle $\big(\omega_{X_K}\otimes (\mc{L}|_{X_K})^d\big)^\vee$ is nef and big for some $0\le d< \dim (X/R)$, where $\dim (X/R)$ is the relative dimension of $X\to \spec R$.
\end{itemize}
Then, we have that:
\begin{itemize}
	\item [\emph{(i)}] $X_k$ contains a geometrically irreducible $k$-subscheme with dimension $\ge d$, and
    \item [\emph{(ii)}] if the maximal dimension of geometrically irreducible $k$-subschemes in $X_k$ is equal to $d$, then at least one of these geometrically irreducible $k$-subschemes is $k$-unirational.
\end{itemize}
\end{theorem}

\begin{proof}
If $d=0$, then the conclusions (i) and (ii) are automatic by Proposition~\ref{Kollarhigherdimensional} and
\cite[Theorem 1.5]{jasonax} as $R$ is a nice DVR by 
assumption (see Definition~\ref{nice-dvr}).
So, in the rest of the proof, we assume that $d\ge 1$.\\

\emph{Step 1}.
Since $\mc{L}$ is base-point-free over $\spec R$, it gives an $R$-morphism 
$\phi\colon X\to \mathbb{P}_R^N$ for some integer $N$
so that $\mc{L}$ is the pullback of $\mc{O}_{\mathbb{P}_R^N}(1)$.
Moreover, since $\mc{L}$ is big over $\spec R$, the image of $\phi$ has dimension $\dim X$.
By Bertini's theorem for normality, for a general hyperplane $H\subset \mathbb{P}_K^N$, 
the schematic intersection $X_K\cap H:=\phi_K^{-1}(H)$ is geometrically normal 
(see \cite[Corollary 3.4.9]{normal-bertini}).  
Moreover, since $\dim X_K\ge d+1$, 
by Bertini's theorem, \cite[Corollaire 6.11]{jou}, 
we can assume that $X_K\cap H$
is also geometrically integral.  Finally, since $\chara K=0$ and $\mc{L}$ is base-point-free
over $\spec R$,
by the proof of \cite[Theorem (1.13)]{reid}, the variety $X_K\cap H$ is also klt.  
Therefore, inductively, we can take general members $H_1, \ldots, H_d$ in $\grass (1, N)_K$
such that 
$$Y_K:=X_K\cap H_1\cap \cdots \cap H_d$$ 
is geometrically normal, geometrically integral, and klt.
By taking the hyperplanes generally, we can apply the adjunction formula inductively 
(see \cite[Proposition 4.5]{Kol_singularities_of_MMP}) such that
\begin{align*}
\omega_{Y_K}  &\simeq \omega_{X_K} \otimes \mc{O}(H_1)|_{X_K}\otimes \cdots \otimes \mc{O}(H_d)|_{X_K} \otimes \mc{O}_{Y_K}   \\
    &\simeq \omega_{X_K}\otimes {(\mc{L}|_{X_K})^d}\otimes \mc{O}_{Y_K}.
\end{align*}
By hypothesis, $\big(\omega_{X_K}\otimes (\mc{L}|_{X_K})^d\big)^\vee$ is nef and big, thus
$Y_{\ol{K}}$ is weak $\mathbb{Q}$-Fano.\\

\emph{Step 2}.
Take an integral, normal $R$-scheme $D$ with an $R$-morphism $D\to \grass (d, N)_R$ 
as in Proposition~\ref{unirationalparameter}.
Moreover, by Proposition~\ref{unirationalparameter}, 
we can take $D$ such that the image of a general point of $D_K$
is also general in $\grass (d, N)_K$.  Denote by $\mc{X}_D$ the universal family over $D$; 
we have the following commutative diagram.
\[\xymatrix{
\mc{X}_D\times_{\mathbb{P}_R^N} X\ar[rr]^{\projection_{X}}\ar[d]  &   &  X\ar[d]   \\
\mc{X}_D\ar[r]\ar[rd]  & D\times_{\spec R} \mathbb{P}_R^N\ar[r]\ar[d]  &  \mathbb{P}_R^N\ar[d]   \\
 &  D\ar[r]   &  \spec R 
}\]
Denote by $E$ the localisation of $\mc{O}_D$ at the generic point of $D_k$.
Then, $E$ is a discrete valuation ring whose fraction field is $K(D_K)$ and residue field is $K(D_k)$.
Consider the base change to $\spec E$,
\[\projection_E\colon \mc{X}_D\times_{\mathbb{P}_R^N} X\times_D \spec E \to \spec E.\]
The geometric generic fibre of $\projection_E$ is the geometric generic fibre of 
\[\mc{X}_{D_K}\times_{\mathbb{P}_K^N} X_K \to D_K\]
which is weak $\mathbb{Q}$-Fano by Step 1 and Lemma~\ref{qfanofibration}.
Thus, the geometric generic fibre of $\projection_E$ is rationally connected 
(see \cite{zhangqi} or \cite{hm07}).

Notice that $E$ is also a nice DVR by Remark~\ref{smoothtoprimeregular} 
and Proposition~\ref{unirationalparameter}.
Thus, by Theorem~\ref{jason-maintheorem} (recall that $d\ge 1$), Remark~\ref{rem-jason-ax}, and 
Proposition~\ref{Kollarhigherdimensional},
the closed fibre of $\projection_E$,
which is 
\[\mc{X}_{D_k}\times_{\mathbb{P}_k^N} X_k\times_{D_k} \spec K(D_k) \to \spec K(D_k), \]
contains a geometrically irreducible $K(D_k)$-subvariety $V$.
Since 
\[ \projection_{X_k} \colon \mc{X}_{D_k}\times_{\PP^N_k} X_k
\to X_k \] 
is quasi-finite by Proposition~\ref{unirationalparameter}, 
the closure of the image of the $K(D_k)$-variety $V$ in $X_k$ gives a geometrically irreducible $k$-subvariety 
of $X_k$ with dimension $\ge d$.  This completes the proof of (i).\\

\emph{Step 3}.
For (ii), assume that the geometrically irreducible $K(D_k)$-subvariety 
of $\mc{X}_{D_k}\times_{\mathbb{P}_k^N} X_k\times_{D_k} \spec K(D_k) $
has dimension zero, i.e., it is a closed point $\eta$.  
Since $\spec k(\eta)$ is a geometrically irreducible $K(D_k)$-variety,
it must be a $K(D_k)$-rational point.
Because $D_k$ is unirational, by considering the function fields, we see that
the image $\projection_{X_k}(\spec k(\eta))$ in $X_k$ is also $k$-unirational, hence this
gives a geometrically irreducible $k$-subvariety of $X_k$ which is $k$-unirational.
\end{proof}


\begin{theorem}[=Theorem~\ref{dim-thm}]  
	Let $f\colon X\to Z$ be an $L$-polarised g-Fano fibration over 
	a Dedekind scheme $Z$ whose localisation $\mc{O}_{Z, z}$ is a nice DVR 
    for every closed point $z\in Z$.
	If 
	\[ 1\le i_L(X/Z) \le \dim(X/Z), \]
	where $\dim (X/Z)$ is the dimension of the general fibre of $f$, then,
	for any closed point $z\in Z$, the fibre $X_z$ contains a geometrically
	irreducible $k(z)$-subscheme $Y_z \subseteq X_z$,
	where $k(z)$ is the residue field of $Z$ at $z$, such that
	\[ \dim Y_z \ge i_L(X/Z) - 1. \]
	Moreover, if the maximal dimension of the geometrically irreducible $k(z)$-subschemes of $X_z$
	is equal to $i_L(X/Z)-1$, then at least one of these geometrically irreducible 
	$k(z)$-subschemes is $k(z)$-unirational. 
\end{theorem}

\begin{proof}
    For the fixed $z\in Z$, set $R := \mc{O}_{Z, z}$, which is a nice DVR by assumption.
	Set
	\[ d:= i_L(X/Z) - 1, \]
	then $0\le d < \dim (X/Z)$.  Let $K$ be the fraction field of $R$
	(i.e., the function field $K(Z)$ of $Z$).  Then, we have that
	\[ (\omega_{X_K}\otimes \mc{O}_{X_K}(L|_{X_K})^d)^{\vee} \equiv \mc{O}_{X_K}(L|_{X_K}), \]
	which is nef and big since $-K_{X_K} \equiv i_L(X/Z) L|_{X_K}$ over $K(Z)$.
	Recall that the geometric generic fibre of $f$ is weak $\QQ$-Fano (see Definition~\ref{polarised-g-fano}),
	then the result follows immediately from Theorem~\ref{maintheorem2}.
\end{proof}


\section{Examples}\label{examplesandcounterexamples}

The following example shows that, in certain situations, such as, the construction in the proof of Corollary~\ref{Ax-chara-zero},
the $L$-index of an $L$-polarised g-Fano fibration $X\to Z$ is equal to
the Fano index of the general fibre of $X\to Z$.

\begin{example}\label{exa-complete-intersection}
    Let $n$ be a natural number.  Let $Z$ be a smooth quasi-projective curve over a 
    field $k$ of characteristic zero.
    Let $H_1, \dots, H_r$ be ample$/Z$ divisors on $Z\times_{\spec k} \PP^n_k$
    with relative degrees $d_1, \dots, d_r$ (with respect to the projection $Z\times_{\spec k} \PP^n_k \to Z$).
    Moreover, assume that every $H_i$ is smooth over the generic point of $Z$ for every $1\le i\le r$.

    Denote by $X := H_1\cap \cdots \cap H_r$. 
    Let $L$ be the pullback of a general hyperplane of $\PP_k^n$ to $X$
    via $X \to Z\times_{\spec k} \PP^n_k \to \PP^n_k$, 
    then $L$ is a very ample$/Z$ Cartier divisor on $X$.
    Assume that a general fibre $F$ of $X\to Z$ is a smooth complete intersection 
    of dimension $\ge 3$
    defined by the smooth hypersurfaces $H_1|_F, \dots, H_r|_F$. 
    Assume that $\sum_{i=1}^r d_i < n+1$, then $F$ is
    a smooth Fano variety.
    Moreover, as $\dim F \ge 3$, the Picard number $\rho(F)$
    of $F$ is equal to one by Lefschetz theorem
    (see \cite[Example 3.1.25]{robpositivity1}),
    hence $i_L(X/Z)$ is a well-defined non-negative integer.  
    It is easy to see that
    \[ i_L(X/Z) = \fanoind (F) = n+1 - \sum_{i=1}^r d_i \]
    by \cite[(V.1.9.1)]{Kollar_rational_curves}, where $\fanoind (F)$ is the Fano index of $F$.
    Notice that $L|_F$ is a fundamental divisor of $F$.

    For any fixed closed point $z\in Z$, the $k(z)$-variety $X_z$ is the degeneration of
    smooth Fano varieties that are complete intersections.  
    Then, Theorem~\ref{dim-thm} predicts that $X_z$
    contains a geometrically irreducible $k(z)$-subvariety of dimension at least $\fanoind (F) - 1$. 
\end{example}


Now, we show that the lower bound on dimension~\eqref{key-condition} 
in Theorem~\ref{dim-thm} is sharp by giving an example of
an $L$-polarised g-Fano fibration
whose $L$-index is equal to one and the only 
geometrically irreducible closed subscheme on the degeneration 
is a rational point.
In particular, when the $L$-index is one, it is possible that
there is no \emph{positive dimensional} geometrically irreducible subscheme on the special fibre.

\begin{example}\label{counterexample}
Let $k=\mathbb{Q}$, $C=\mathbb{P}^1_k$, and in $\mathbb{P}^1_k\times_{\spec k} \mathbb{P}^4_k$ 
consider the family of 3-folds
\[ X := \big(s \big( (x^2+y^2)^2+(z^2+w^2)^2\big)+tu^4=0 \big)\subset \mathbb{P}^1_k\times_{\spec k} \mathbb{P}^4_k,\]
with projection $g\colon X\to C$,
where $[s:t]$ and $[x: y: z: w: u]$ are the homogeneous coordinates of $\mathbb{P}_k^1$ and $\mathbb{P}_k^4$ respectively.  
Let $L$ be the pullback of a general hyperplane of  $\PP_k^4$ via the projection $X \to \PP_k^4$.
Then, $g\colon X\to C$ is an $L$-polarised g-Fano fibration 
whose general fibres are smooth Fano 3-folds.
By Example~\ref{exa-complete-intersection}, we have
\[ i_L(X/C) = 1. \]
Set $c:= [1: 0]$.
We claim that the only geometrically irreducible $\mathbb{Q}$-subvariety of $X_c$
is the $\mathbb{Q}$-rational point $P:=[1:0]\times[0:0:0:0:1]\in \mathbb{P}^1_k\times_{\spec k} \mathbb{P}^4_k$.

First, the 3-fold $X_c$ is irreducible but not geometrically irreducible since after base change to $\mathbb{Q}[i]$
there are two irreducible components (with $i = \sqrt{-1}$),
\begin{align*}
X_c^+ &:=(x^2+y^2+i(z^2+w^2)=0)\subset \mathbb{P}^4_{\mathbb{Q}[i]},  \\
X_c^-  &:=(x^2+y^2-i(z^2+w^2)=0)\subset \mathbb{P}^4_{\mathbb{Q}[i]},
\end{align*}
which are conjugate under the group action of $\galois (\mathbb{Q}[i]/\mathbb{Q})$.
Moreover, both $X_c^+$ and $X_c^-$ map surjectively onto $X_c$ in the following Cartesian diagram.
\[\xymatrix{
X_c^+\cup X_c^-\ar[r]\ar[d]  &  X_c\ar[d]  \\
\spec \mathbb{Q}[i]\ar[r]  &  \spec \mathbb{Q}
}\]
Suppose that $X_2$ is a geometrically irreducible $\mathbb{Q}$-subvariety of $X_c$.
Then, $X_2\otimes_{\mathbb{Q}}\mathbb{Q}[i]$ is $\galois (\mathbb{Q}[i]/\mathbb{Q})$-invariant
and should be contained in 
\[X_c^+\cap X_c^-=( x^2+y^2=z^2+w^2=0)\subset \mathbb{P}^4_{\mathbb{Q}[i]}.\]
Thus, $X_2\otimes_{\mathbb{Q}}\mathbb{Q}[i]$ is one of the irreducible components 
of $X_c^+\cap X_c^-$
which are 2-dimensional linear subspaces of $\mathbb{P}^4_{\mathbb{Q}[i]}$.
However, none of these linear subspaces is $\galois (\mathbb{Q}[i]/\mathbb{Q})$-invariant, hence $X_c$ has no geometrically irreducible $\mathbb{Q}$-subvariety of dimension two.
The same argument shows that $X_c$ does not admit one-dimensional geometrically irreducible 
$\mathbb{Q}$-subvariety neither.
Finally, by dimension reason, the only zero-dimensional geometrically irreducible $\mathbb{Q}$-subvariety
is a $\mathbb{Q}$-rational point, and the only possibility is $P=[1:0]\times[0:0:0:0:1]$.
\end{example}

\medskip

\printbibliography








 \vspace{1em}
 
\noindent\small{Santai Qu} 

\noindent\small{\textsc{Institute of Geometry and Physics, University of Science and Technology of China, No. 96 Jinzhai Road, Hefei, Anhui Province, 230026, China} }

\noindent\small{Email: \texttt{santaiqu@ustc.edu.cn}}

 \end{document}